\newtheorem{theorem}{Theorem}
\newtheorem{defn}[theorem]{Definition}
\newtheorem{corl}[theorem]{Corollary}
\newtheorem*{theorem*}{Theorem}
\numberwithin{theorem}{section}
\numberwithin{equation}{section}
\newcommand{\dz}{\partial_z}
\newcommand{\disk}{\mathbb{D}}
\newcommand{\zbar}{\overline{z}}
\newcommand{\dzbar}{\partial_{\zbar}}
\newcommand{\diverg}{\operatorname{div}}
\newcommand{\imag}{\operatorname{Im}}
\newcommand{\real}{\operatorname{Re}}
\title{Improved H\"older Continuity of Quasiconformal Maps}
\author{Tyler Bongers 
	\thanks{Email: \texttt{charlesb@math.msu.edu} \\
	2010 \emph{Mathematics Subject Classification}: 30C62, 26A16}}
\date{\today}
\begin{document}

\maketitle 

\begin{abstract}
Quasiconformal maps in the complex plane are homeomorphisms that satisfy certain geometric distortion inequalities; infinitesimally, they map circles to ellipses with bounded eccentricity. The local distortion properties of these maps give rise to a certain degree of global regularity and H\"older continuity. In this paper, we give improved lower bounds for the H\"older continuity of these maps; the analysis is based on combining the isoperimetric inequality with a study of the length of quasicircles. Furthermore, the extremizers for H\"older continuity are characterized, and some applications are given to solutions to elliptic partial differential equations.
\end{abstract}

\tableofcontents

\section{Introduction}
A $K$-quasiconformal map $f$ is an orientation-preserving homeomorphism between two domains in the complex plane, lying in the Sobolev space $W^{1, 2}_{\text{loc}}$ and satisfying the distortion inequality 
$$\max_{\beta} |\partial_{\beta} f| \le K \min_{\beta} |\partial_{\beta} f|$$ 
for almost all $z$, where $\partial_{\beta}$ is the directional derivative in the direction $\beta$. These maps can be realized as solutions to the Beltrami equation 
\begin{equation}
\partial_{\overline{z}} f = \mu(z) \partial_z f \label{eq:c_linear_beltrami}
\end{equation}
where the Beltrami coefficient $\mu(z)$ has the bound $\|\mu\|_{\infty} \le \frac{K - 1}{K + 1} < 1$ and represents the complex distortion of the function $f$. Such maps have useful geometric and regularity properties, and provide a natural framework for generalizing conformal maps. They arise naturally in a number of applications, and are closely related with the solutions to elliptic PDEs in the plane. 

In this paper, we will be concerned with the precise degree of regularity and smoothness properties of quasiconformal maps; we will be most interested in determining what H\"older continuity such maps have (that is, which Lipschitz class the functions lie in). A function $f$ defined on an open set $\Omega$ is said to be locally $\alpha$-H\"older continuous if for each compact set $E \subseteq \Omega$ there exists a constant $C = C(f, E)$ with  
$$|f(z_1) - f(z_2)| \le C |z_1 - z_2|^{\alpha}$$ 
for all $z_1, z_2 \in E$; equivalently, $f$ lies in the Lipschitz class $C^{\alpha}(\Omega)$. It is well known that $K$-quasiconformal maps are H\"older continuous with exponent $1/K$, due to an old theorem of Mori \cite{Mor56}. More recently, quantitative upper and lower bounds on the size (in the sense of Hausdorff measure and dimension) of the set where $f$ can attain the worst-case H\"older regularity were given by Astala, Iwaniec, Prause, and Saksman \cite{AstIwaPraSak15bilip} and the author \cite{Bon17qua}.

However, the exponent $1/K$ is not always optimal for a $K$-quasiconformal map. For example, there are bilipschitz $K$-quasiconformal maps defined on $\mathbb{C}$ which are not $(K - \epsilon)$-quasiconformal for any $\epsilon > 0$. A particular case of this is a map exhibiting rotation, such as $z |z|^{i \gamma}$ for an appropriately chosen exponent $\gamma \in \mathbb{R}$. Therefore, it is apparent that the exact H\"older regularity of a quasiconformal map depends on more than just the magnitude of the complex distortion and should instead encode something about the structure of the distortion. A result of Ricciardi \cite{Ric08bel} gave a great deal of information; in that paper, it was shown that if $f$ is a solution to the Beltrami equation $\dzbar f = \mu \dz f$ then $f$ is $\alpha$-H\"older continuous with exponent 
\begin{equation}
\alpha \ge \left(\sup_{S_{\rho, x} \subset \Omega} \frac{1}{|S_{\rho, x}|} \int_{S_{\rho, x}} \frac{|1 - \overline{\eta}^2 \mu|^2}{1 - |\mu|^2} \, d\sigma\right)^{-1} \label{eq:ricciardi_result}
\end{equation}
where $S_{\rho, x}$ is a circle with radius $\rho$ centered at $x \in \Omega$, $\eta$ is an outward unit normal, and $d\sigma$ is the arclength measure. This was proven through a sharp Wirtinger inequality. The integrand here also appeared in \cite{ReiWal65} in the context of ring modules; for more information, as well as some related estimates and theorems about extremizers, see the book \cite{BojGutMarRya13} of Bojarski, Gutlyanskii, Martio, and Ryazanov.

An important application of the regularity results for quasiconformal maps is their connection with solutions to elliptic partial differential equations of the form  
\begin{equation}
\diverg(A \nabla u) = 0 \label{eq:linear_elliptic_pde}
\end{equation} 
where $z \mapsto A(z)$ is an essentially bounded, symmetric, measurable, matrix-valued function satisfying $\lambda \langle \xi, \xi \rangle \le \langle \xi, A(z)\xi\rangle \le \Lambda \langle \xi, \xi \rangle$ for some $0 < \lambda \le \Lambda < \infty$ at almost every $z$. Just as there is a correspondence between the Cauchy-Riemann equation $\dzbar f = 0$ and the Laplacian $\Delta u = 0$ (where $f = u + iv$ and $v$ is the harmonic conjugate of $u$), there is a correspondence between the $\mathbb{C}$-linear Beltrami equation \eqref{eq:c_linear_beltrami} and the divergence form elliptic equation \eqref{eq:linear_elliptic_pde}, where $f = u + iv$ and $v$ is the $A$-harmonic conjugate of $u$. The exact details of this correspondence can be found in, e.g. Chapter 16 of \cite{AstIwaMar09}. 

Solutions to these equations are known to be H\"older continuous, and the study of their regularity has a long history. H\"older continuity of solutions to \eqref{eq:linear_elliptic_pde} was shown by De Giorgi \cite{Deg57}; later, Piccinini and Spagnolo \cite{PicSpa72} gave a quantitative estimate that the H\"older continuity exponent of $u$ is at least $\sqrt{\lambda / \Lambda}$ (as well as further improved bounds for the case of an isotropic matrix $A$). More recently, Ricciardi \cite{Ric08div} showed that the H\"older exponent is at least
$$\alpha \ge \left(\sup_{S_{\rho, x} \subset \Omega} \frac{1}{|S_{\rho, x}|} \int_{S_{\rho, x}} \langle \eta, A \eta\rangle d\sigma\right)^{-1}.$$

The main result of this paper will be an improvement of the H\"older continuity exponent given in \eqref{eq:ricciardi_result} to incorporate an extra term involving the geometry of the underlying map. In particular, we will show that
\begin{theorem}
Let $f : \Omega \to \Omega'$ be a continuous and $W^{1, 2}_{\text{loc}}$ solution to the Beltrami equation $\dzbar f = \mu(z) \dz f$ with $|\mu(z)| \le \frac{K - 1}{K + 1}$. Then $f$ is $\alpha$-H\"older continuous for some exponent $\alpha$ satisfying
$$\alpha \ge \left[4 \pi \sup_{S_{\rho, x} \subset \Omega} \frac{|f(\mathbb{D}_{\rho, x})|}{\mathcal{H}^1\big(f(S_{\rho, x})\big)^2} \sup_{S_{\rho, x} \subset \Omega} \frac{1}{|S_{\rho, x}|} \int \frac{|1 - \bar{\eta}^2 \mu|^2}{1 - |\mu|^2} d\sigma \right]^{-1}$$
where $S_{\rho, x}$ is the circle centered at $x$ with radius $\rho$, $\eta$ is the outward unit normal, and $\sigma$ is arclength measure.
\end{theorem}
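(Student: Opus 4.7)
My plan is to fix a point $x \in \Omega$ and a radius $\rho > 0$ with $S_{\rho,x} \subset \Omega$, and to derive a power-law upper bound for the image area $A(t) := |f(\disk_{t,x})|$ as $t$ decreases; this will then be converted into H\"older continuity through a standard distortion estimate. Two ingredients drive the estimate: a Cauchy--Schwarz inequality relating the length $L(t) := \mathcal{H}^1(f(S_{t,x}))$ to $A'(t)$ and the Ricciardi integrand, and the geometric ratio $A/L^2$ that supplies the new factor in the theorem.

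The first step is to establish the pointwise identity for the tangential derivative on $S_{t,x}$. With $\tau = i\eta$ the counterclockwise unit tangent, the Beltrami equation produces
$$|\partial_\tau f|^2 \;=\; |\dz f|^2\,|1 - \bar\eta^2\mu|^2 \;=\; J_f \cdot \frac{|1-\bar\eta^2\mu|^2}{1-|\mu|^2},$$
where $J_f = (1-|\mu|^2)|\dz f|^2$ is the Jacobian. Writing $|\partial_\tau f| = \sqrt{J_f}\cdot |1-\bar\eta^2\mu|/\sqrt{1-|\mu|^2}$ and applying Cauchy--Schwarz to $L(t) = \int_{S_{t,x}}|\partial_\tau f|\,d\sigma$, combined with the polar-coordinate identity $A'(t) = \int_{S_{t,x}} J_f\,d\sigma$ obtained from $A(t) = \int_{\disk_{t,x}} J_f\,dA$, yields the key inequality
$$L(t)^2 \;\le\; A'(t) \cdot \int_{S_{t,x}} \frac{|1-\bar\eta^2\mu|^2}{1-|\mu|^2}\,d\sigma.$$

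The departure from Ricciardi's argument would be to avoid the universal isoperimetric bound $4\pi A \le L^2$ and instead use the sharper tautological bound $A(t) \le \kappa L(t)^2$, where $\kappa := \sup_{S_{s,x}\subset\Omega} A(s)/L(s)^2$ is the first supremum in the theorem (always $\le 1/(4\pi)$, with equality only for circular images). Inserting this into the previous inequality yields $A(t) \le \kappa\gamma(t)A'(t)$, where $\gamma(t)$ denotes the Ricciardi integral. Letting $\beta$ denote the second supremum in the theorem, so that $\gamma(t) \le 2\pi t\,\beta$, the resulting differential inequality $(\log A)'(t) \ge 1/(2\pi\kappa\beta t)$ integrates over $[r,\rho]$ to
$$A(r) \;\le\; A(\rho)\,(r/\rho)^{1/(2\pi\kappa\beta)}.$$

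To pass from this area bound to a H\"older estimate, I would observe that since $f(x)$ is interior to the Jordan domain $f(\disk_{r,x})$ with distance $m(r) := \min_{|z-x|=r}|f(z)-f(x)|$ to its boundary $f(S_{r,x})$, we have $\disk_{m(r),f(x)} \subseteq f(\disk_{r,x})$, hence $\pi m(r)^2 \le A(r)$ and $m(r) \lesssim r^{1/(4\pi\kappa\beta)}$. The quasiconformal distortion theorem of Mori then bounds $M(r) := \max_{|z-x|=r}|f(z)-f(x)|$ by a $K$-dependent multiple of $m(r)$, extending the power-law bound to $M(r)$ and establishing $\alpha$-H\"older continuity with $\alpha = 1/(4\pi\kappa\beta)$. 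I expect the main obstacle to be the careful justification of the integral identities in the first step: the $W^{1,2}_{\text{loc}}$ hypothesis only gives absolute continuity of $f$ on circles for a.e.\ radius, and making precise both the length formula $L(t) = \int_{S_{t,x}}|\partial_\tau f|\,d\sigma$ and the area derivative $A'(t) = \int_{S_{t,x}} J_f\,d\sigma$ requires combining the Sobolev ACL property with the rectifiability of quasiconformal images of circles, together with a density argument to transfer the bound at generic radii to every $r$.
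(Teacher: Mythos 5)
Your proposal is correct and follows essentially the same route as the paper's proof: the same pointwise identity $|\partial_\tau f|^2 = J_f\,|1-\bar\eta^2\mu|^2/(1-|\mu|^2)$ for the length of the image circle (the paper derives it by a real-coordinate computation, you via Wirtinger calculus), the same Cauchy--Schwarz step bounding $L(t)^2$ by $A'(t)$ times the Ricciardi integral, the same replacement of the isoperimetric constant by the supremum of $A/L^2$, and the same Gronwall-type integration of the resulting differential inequality, with the passage from the area bound to H\"older continuity being exactly the quasisymmetry/distortion argument the paper invokes. The exponent $1/(4\pi\kappa\beta)$ you obtain matches the theorem.
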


Here, it is important to note that the isoperimetric inequality guarantees that 
$$4 \pi \frac{|f(\mathbb{D}_t)|}{\mathcal{H}^1(f(\mathbb{S}_t))^2} \le 1$$
for all $t$ (and is frequently strictly less than $1$); here, $\mathbb{D}_t$ is the disk centered at the origin with radius $t$. Our result therefore gives an improvement over the previously known regularity whenever we can impose an upper bound on $4 \pi \frac{|f(\mathbb{D}_t)|}{\mathcal{H}^1(f(\mathbb{S}_t))^2}$; for example, any affine map which stretches differently in two orthogonal directions will exhibit this. Furthermore, we can use this information to determine the structure of the extremizers for H\"older continuity. We have the following definition:
\begin{defn}\label{defn:extremizer}
Let $f$ be a $K$-quasiconformal map, a $K$-quasiregular map, or a solution to \eqref{eq:linear_elliptic_pde} with ellipticity constants satisfying $\sqrt{\Lambda / \lambda} = K$. We say that $f$ is an extremizer for H\"older continuity at the origin if $f$ is not more than $1/K$-H\"older continuous there. In particular, for each $\epsilon > 0$, there is a sequence $r_n \to 0$ such that $|f(r_n) - f(0)| \ge r_n^{1/K + \epsilon}$. 
\end{defn}

We can now give the form of the complex distortion of a $K$-quasiconformal map that exhibits the worst-case regularity. Motivated by the fact that the Beltrami coefficient of the radial stretch $z|z|^{1/K - 1}$ is $-k z/\overline{z}$ with $k = \frac{K + 1}{K - 1}$, we have the following result:
\begin{theorem} 
Suppose that $f$ is $K$-quasiconformal and an extremizer for H\"older continuity at the origin. Write the Beltrami coefficient in the form $\mu(z) = \frac{z}{\overline{z}} \left(-k + \epsilon(z)\right)$. Then there is a sequence of scales $t_n \to 0$ for which
$$\int_{t_n}^1 \frac 1 r \left(\frac 1 {|S_r|} \int_{S_r} \real \epsilon(z) \, d\sigma \right) \, dr = o\left(\log \frac 1 {t_n}\right).$$
\end{theorem}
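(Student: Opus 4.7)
The plan is to combine the scale-by-scale integral form of Theorem 1.1 with the extremizer hypothesis to bound the logarithmic average of the H\"older deficit $K - A(s) B(s)$, and then exploit the admissibility constraint $|\mu| \le k$ to convert this into a bound on $\real \eps$. Here
$$A(r) := \frac{4 \pi |f(\disk_r)|}{\mathcal{H}^1(f(S_r))^2}, \qquad B(r) := \frac{1}{|S_r|} \int_{S_r} \frac{|1 - \bar{\eta}^2 \mu|^2}{1 - |\mu|^2}\, d\sigma,$$
both taken on circles centered at the origin.

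I expect the proof of Theorem 1.1 to proceed through a first-order differential inequality of the form $(d/dr)\log M(r) \ge 1/(r A(r) B(r))$ for $M(r) := \max_{|z|=r} |f(z) - f(0)|$; integrating on $[t,1]$ then yields $\int_t^1 ds/(s A(s) B(s)) \le \log M(1) - \log M(t)$. Given an arbitrary $\delta > 0$, the extremizer hypothesis produces $r_n \to 0$ with $\log(1/M(r_n)) \le (1/K + \delta)\log(1/r_n)$. Subtracting $K^{-1}\log(1/r_n) = \int_{r_n}^1 ds/(sK)$, using the pointwise bounds $A(s) \le 1$ (isoperimetric) and $B(s) \le K$ (from $|1 - \bar{\eta}^2 \mu|^2 / (1 - |\mu|^2) \le (1+|\mu|)/(1-|\mu|) \le K$), and the consequence $K - AB \ge K - B$ of $A \le 1$, one reaches
$$\int_{r_n}^1 \frac{K - B(s)}{s}\, ds \le K^2 \delta \log(1/r_n) + O(1). \qquad (\star)$$

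Next I would directly compute $K - B(s)$ for the specific form $\mu = (z/\zbar)(-k + \eps)$. On a circle $S_s$ centered at the origin, $\eta = z/|z|$ gives $\bar{\eta}^2 \mu = -k + \eps$, and a short expansion yields
$$K - \frac{|1 + k - \eps|^2}{1 - |{-k + \eps}|^2} = \frac{2K \real \eps - (K+1)|\eps|^2}{1 - |{-k + \eps}|^2}.$$
The constraint $|\mu| \le k$, i.e.\ $|{-k + \eps}| \le k$, is equivalent to $|\eps|^2 \le 2k \real \eps$; in particular $\real \eps \ge 0$ pointwise. Substituting this bound in the numerator and using the algebraic identity $K - (K+1)k = 1$ shows the numerator is at least $2\real \eps$, while the denominator is at most $1$. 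Averaging over $S_s$ gives $K - B(s) \ge 2 |S_s|^{-1} \int_{S_s} \real \eps\, d\sigma$; inserting into $(\star)$ produces
$$\int_{r_n}^1 \frac{1}{s} \left(\frac{1}{|S_s|} \int_{S_s} \real \eps\, d\sigma\right) ds \le \frac{K^2}{2}\delta \log(1/r_n) + O(1),$$
and a standard diagonalization over $\delta \to 0$ (choosing one element from each $\delta$-dependent extremizer sequence) extracts a single sequence $t_n \to 0$ along which the left-hand integral is $o(\log(1/t_n))$, which is the desired conclusion.

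The main technical obstacle is the very first step: upgrading Theorem 1.1 from its global-supremum statement to the scale-by-scale integrated inequality used above. I expect this falls out of the proof of Theorem 1.1 via a first-order differential inequality for $M(r)$ at almost every scale $r$, but the details must be verified carefully. Once that is in hand, the remainder is a direct algebraic computation exploiting the sign constraint $\real \eps \ge 0$ forced by $|\mu| \le k$.
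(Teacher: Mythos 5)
Your proposal follows essentially the same route as the paper. The paper's proof of Theorem 2.1 already produces the scale-by-scale inequality you anticipate — namely $\varphi'(r)/\varphi(r) \ge 2/(r\,A(r)B(r))$ for a.e.\ $r$, where $\varphi(r) = |f(\disk_r)|$ — and the proof of the extremizer theorem integrates this exactly as you do, imports the extremizer hypothesis, and converts via a pointwise bound relating $K - B(r)$ to the circular average of $\real\eps$. So the ``main technical obstacle'' you flag is already resolved in the body of the paper, and the rest of your argument is a faithful reconstruction of the intended one.

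Two small remarks. First, you phrase the differential inequality in terms of $M(r) = \max_{|z|=r}|f(z)-f(0)|$; the paper's inequality is for $\varphi(r) = |f(\disk_r)|$, and the two are only \emph{comparable} ($\varphi \sim_K M^2$ by quasisymmetry), not differentiably related, so the pointwise inequality $(d/dr)\log M \ge 1/(rAB)$ does not literally follow. What does follow is the integrated inequality $\int_t^1 ds/(sAB) \le \log M(1) - \log M(t) + O_K(1)$, and since you carry the $O(1)$ error through to $(\star)$, the argument survives. Second, your algebraic computation of $K - B(s)$ is actually slightly cleaner than the paper's: using $|\eps|^2 \le 2k\real\eps$ together with $K - (K+1)k = 1$ you handle a genuinely complex $\eps$ directly and obtain the explicit constant $c_1 = 2$, whereas the paper's computation in (3.9)--(3.10) silently replaces $\eps$ by $\real\eps$ inside $\mu$, which is only valid when $\eps$ is real (the final estimate still holds in general, but the intermediate identity $|1-e^{-2i\theta}\mu|^2/(1-|\mu|^2) = |1-w|^2/(1-|w|^2)$ with $w=-k+\real\eps$ fails for $\imag\eps \ne 0$). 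Your version fixes this wrinkle.
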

There is an analogous theorem for the geometric distortion properties of such extremizers, and how far they can be from a map which preserves circularity: 
\begin{theorem}
Suppose that $f$ is $K$-quasiconformal and an extremizer for H\"older continuity at the origin. Define a function $\delta$ by $\frac{|f(\mathbb{D}_t)|}{\mathcal{H}^1(f(S_t))^2} = \frac{1}{4\pi (1 + \delta(t))}.$
Then there is a sequence $t_n \to 0^+$ such that
$$\int_{t_n}^1 \frac{\delta(r)}{r} \, dr = o\left(\log \frac 1 {t_n}\right).$$
\end{theorem}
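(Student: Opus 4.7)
The plan is to extract a scale-by-scale differential inequality from the proof of Theorem 1.1, use the extremizer hypothesis to bound the resulting integral at small scales, and then diagonalize in $\epsilon$. Set $A(r) = |f(\mathbb{D}_{r,0})|$, $L(r) = \mathcal{H}^1(f(S_{r,0}))$, and $I(r) = |S_r|^{-1}\int_{S_r}\frac{|1-\bar\eta^2\mu|^2}{1-|\mu|^2}\,d\sigma$, so that $L(r)^2 = 4\pi(1+\delta(r))A(r)$ by definition, and $I(r) \le K$ pointwise since $|\mu| \le (K-1)/(K+1)$.

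The first step is the differential inequality underlying Theorem 1.1, localized at each scale rather than absorbed into a supremum. In polar coordinates on $S_r$, the Beltrami equation gives $|\partial_\tau f|^2 = \frac{|1-\bar\eta^2\mu|^2}{1-|\mu|^2}J_f$ for the unit-speed tangential derivative, and Cauchy--Schwarz applied to $L(r) = \int_{S_r}|\partial_\tau f|\,d\sigma$ together with the polar identity $A'(r) = \int_{S_r}J_f\,d\sigma$ yields $L(r)^2 \le 2\pi r\, I(r)\, A'(r)$. Substituting the definition of $\delta$ and using $I(r)\le K$ gives
$$\frac{A'(r)}{A(r)} \ge \frac{2(1+\delta(r))}{rI(r)} \ge \frac{2(1+\delta(r))}{Kr},$$
which integrates to
$$\log A(r) \le \log A(1) - \frac{2}{K}\log\frac{1}{r} - \frac{2}{K}\int_r^1 \frac{\delta(s)}{s}\,ds.$$

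The second step is a matching lower bound on $A(r_n)$ along an extremizer sequence. The Lehto--Virtanen growth estimate for $K$-quasiconformal maps gives $\min_{|z|=r}|f(z)-f(0)| \ge c_K \max_{|z|=r}|f(z)-f(0)|$ on disks compactly contained in $\Omega$, so the inradius of $f(\mathbb{D}_{r,0})$ about $f(0)$ is bounded below by $c_K|f(r)-f(0)|$, whence $A(r) \ge \pi c_K^2|f(r)-f(0)|^2$. By Definition 1.2, for each $\epsilon>0$ there is a sequence $r_n(\epsilon) \to 0$ with $|f(r_n)-f(0)| \ge r_n^{1/K+\epsilon}$, so $\log A(r_n) \ge (2/K+2\epsilon)\log r_n - O_K(1)$. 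Combining with the upper bound cancels the $(2/K)\log(1/r_n)$ terms and produces
$$\int_{r_n(\epsilon)}^1 \frac{\delta(r)}{r}\,dr \le K\epsilon\log(1/r_n(\epsilon)) + C_0,$$
with $C_0 = C_0(K,f)$ independent of $\epsilon$ and $n$. A diagonal argument then finishes: for $\epsilon_k = 1/k$ choose $n_k$ so that $\log(1/r_{n_k}(\epsilon_k)) > kC_0$, and set $t_k := r_{n_k}(\epsilon_k)$; then $t_k \to 0$ and $\int_{t_k}^1 \delta(r)/r\,dr \le \frac{K+1}{k}\log(1/t_k) = o(\log(1/t_k))$.

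The main obstacle is obtaining the lower bound on $A(r_n)$ free of the factor $(1+\delta(r_n))$. The immediate diameter--perimeter inequality $|f(r)-f(0)| \le L(r)/2$ only gives $A(r) \ge |f(r)-f(0)|^2/(\pi(1+\delta(r)))$, which when combined with the upper bound produces an extra $(K/2)\log(1+\delta(r_n))$ term on the right-hand side that could a priori be comparable to $\log(1/r_n)$ and spoil the diagonal argument. Invoking the Lehto--Virtanen quasiconformal distortion theorem controls the inradius of $f(\mathbb{D}_{r,0})$ intrinsically and sidesteps this issue.
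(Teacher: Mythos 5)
Your proof is correct and takes essentially the same approach as the paper: the per-scale differential inequality $L(r)^2 \le 2\pi r\, I(r)\, \varphi'(r)$ with $I(r) \le K$, integration in logarithmic form, a lower bound on $\varphi(r_n)=|f(\mathbb{D}_{r_n})|$ extracted from the extremizer hypothesis, and a diagonalization over $\epsilon \to 0$. Your explicit appeal to the quasiconformal distortion theorem to convert $|f(r_n)-f(0)| \ge r_n^{1/K+\epsilon}$ into $A(r_n) \gtrsim_K r_n^{2/K+2\epsilon}$ (avoiding the spurious $(1+\delta(r_n))$ factor) is exactly the quasisymmetry step the paper invokes implicitly when asserting $\varphi(t_n) \ge \varphi(1) t_n^{2/K(1+\gamma_n)}$.
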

Our last main results are on the regularity and H\"older continuity extremizers of solutions to the elliptic equation \eqref{eq:linear_elliptic_pde}. Suppose $u \in W^{1, 2}_{\text{loc}}(\Omega)$ is a continuous solution to $\diverg(A \nabla u) = 0$ on a simply connected domain $\Omega$, that $A$ is symmetric, measurable, and satisfies the ellipticity bound $\frac 1 K |\xi|^2 \le \langle \xi, A(z) \xi \rangle \le K |\xi|^2$ almost everywhere. Let $v$ be the $A$-harmonic conjugate of $u$ and $f = u + iv$. We will show that
\begin{theorem}
If $u$ is an extremizer for H\"older continuity at the origin, $u$ must be of the form
$$u = \Phi \circ g$$
where $\Phi$ is harmonic with non-vanishing gradient at $g(0)$, $g$ is $K$-quasiconformal, and $g$ exhibits the worst-case regularity for a $K$-quasiconformal map. In particular, the bounds of the previous two theorems apply to the Beltrami coefficient and circular distortion of $g$.
\end{theorem}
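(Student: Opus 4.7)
The strategy is to reduce the statement for $u$ to the worst-case regularity statement for a $K$-quasiconformal map, by peeling off a harmonic factor via Stoilow factorization.

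First I would invoke the correspondence between \eqref{eq:linear_elliptic_pde} and \eqref{eq:c_linear_beltrami} (as detailed in Chapter 16 of \cite{AstIwaMar09}) to conclude that $f = u + iv$ is a $K$-quasiregular map on $\Omega$ solving $\dzbar f = \mu \dz f$ with $\|\mu\|_{\infty} \le (K-1)/(K+1)$. Since $\Omega$ is simply connected, the Stoilow factorization lets me write $f = h \circ g$, where $g : \Omega \to g(\Omega)$ is a $K$-quasiconformal homeomorphism with the same Beltrami coefficient $\mu$, and $h$ is holomorphic on $g(\Omega)$. Taking real parts yields $u = \Phi \circ g$ with $\Phi = \real h$ harmonic, which is already the claimed structural form.

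Next I need to rule out $\nabla \Phi(g(0)) = 0$, equivalently $h'(g(0)) = 0$. If this derivative vanished, then the Taylor series of $h$ about $g(0)$ would begin at some order $n \ge 2$, giving $|h(w) - h(g(0))| \le C|w - g(0)|^n$ nearby. By Mori's theorem $g$ is locally $1/K$-H\"older, so for $z$ near the origin I would obtain $|u(z) - u(0)| \le |f(z) - f(0)| \le C|g(z) - g(0)|^n \le C' |z|^{n/K}$, producing H\"older regularity at the origin with exponent $n/K > 1/K$ and directly contradicting Definition \ref{defn:extremizer}.

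With $h'(g(0)) \ne 0$ established, $h$ is biLipschitz in a small neighborhood of $g(0)$, so $|f(z) - f(0)| \le C|g(z) - g(0)|$ near $0$. If $g$ were locally $\alpha$-H\"older at $0$ for any $\alpha > 1/K$, this inequality would pass to $u$, again violating the extremizer hypothesis. Thus $g$ is itself an extremizer for H\"older continuity at the origin in the sense of Definition \ref{defn:extremizer}, and the preceding two theorems apply directly to the Beltrami coefficient and the circular distortion of $g$.

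The main technical subtlety will be confirming that the Stoilow factorization can be chosen so that $g$ carries the full Beltrami coefficient of $f$ (so that extremality transfers cleanly from $u$ to $g$) and tracking uniformity of the composition estimates with $h$ on a neighborhood of $g(0)$; these points are essentially standard, but need to be verified explicitly to ensure the chain of inequalities above is legitimate on a single fixed scale.
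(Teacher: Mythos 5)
Your argument is correct and follows essentially the same route as the paper: pass to the quasiregular map $f = u + iv$, apply Stoilow factorization $f = h \circ g$, rule out $h'(g(0)) = 0$ because a higher-order zero would upgrade the H\"older exponent at the origin past $1/K$, and transfer extremality to $g$ via the local (bi)Lipschitz behavior of $h$, so that Theorems \ref{thm:extremizer_beltrami} and \ref{thm:extremizer_distortion} apply to $g$. The only difference is that you spell out the Taylor-expansion and composition estimates that the paper leaves implicit by referring back to the proof of Theorem \ref{thm:quasiregular_extremizers}.
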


The outline of this paper is as follows. In Section 2, we prove the main theorem on H\"older continuity of quasiconformal maps. In Section 3, we use this information to classify the extremizers and study geometric distortion, and extend the results to quasiregular maps. In Section 4, we apply the results of the previous sections to solutions to elliptic PDEs.

%
%

\section{Estimate of H\"older Exponent}
The main result of this section is estimate the exact degree of H\"older continuity of a $K$-quasiconformal map through the behavior of the associated Beltrami coefficient. 

\begin{theorem} \label{thm:main_holder}
Let $f : \Omega \to \Omega'$ be a continuous and $W^{1, 2}_{\text{loc}}(\Omega)$ solution to the Beltrami equation $\dzbar f = \mu(z) \dz f$ with $|\mu(z)| \le \frac{K - 1}{K + 1}$. Then $f$ is $\alpha$-H\"older continuous for some exponent $\alpha$ satisfying
$$\alpha \ge \left[4 \pi \sup_{S_{\rho, x} \subset \Omega} \frac{|f(\mathbb{D}_{\rho, x})|}{\mathcal{H}^1\big(f(S_{\rho, x})\big)^2} \sup_{S_{\rho, x} \subset \Omega} \frac{1}{|S_{\rho, x}|} \int_{S_{\rho, x}} \frac{|1 - \bar{\eta}^2 \mu|^2}{1 - |\mu|^2} d\sigma \right]^{-1}$$
where $S_{\rho, x}$ is the circle centered at $x$ with radius $\rho$, $\eta$ is the outward unit normal, and $\sigma$ is arclength measure.
\end{theorem}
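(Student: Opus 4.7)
The plan is to set up a first-order differential inequality for the image-area function $\phi(r,x) := |f(\mathbb{D}_{r,x})|$ that combines a Cauchy--Schwarz estimate on the image-length $\ell(r,x) := \mathcal{H}^1(f(S_{r,x}))$ with the sharp isoperimetric ratio $4\pi\phi/\ell^2$, integrate to obtain an area decay $\phi \leq C r^{2\alpha}$, and then upgrade to a pointwise oscillation bound.

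Fixing $x \in \Omega$ and setting $K_\mu := |1 - \bar\eta^2\mu|^2/(1-|\mu|^2)$, I would first derive the pointwise identity $|\partial_\tau f|^2 = J_f\, K_\mu$ along any smooth curve, using the Beltrami equation and $J_f = |\dz f|^2(1 - |\mu|^2)$ to trade $|\dz f|^2$ for $J_f/(1-|\mu|^2)$. In polar coordinates the area formula then gives $\phi'(r,x) = \int_{S_{r,x}} J_f\, d\sigma$, while $\ell(r,x) = \int_{S_{r,x}} |\partial_\tau f|\, d\sigma$. Applying Cauchy--Schwarz to the factorization $|\partial_\tau f| = \sqrt{J_f}\sqrt{K_\mu}$ produces
$$\ell(r,x)^2 \leq \phi'(r,x) \int_{S_{r,x}} K_\mu\, d\sigma.$$
Defining the localized isoperimetric ratio $I(r,x) := 4\pi\phi(r,x)/\ell(r,x)^2 \in [0,1]$ and substituting $\ell^2 = 4\pi\phi/I$ yields the differential inequality
$$\phi(r,x) \leq \tfrac{1}{2}\, r\, I(r,x)\, \bar K(r,x)\, \phi'(r,x), \qquad \bar K(r,x) := \frac{1}{|S_{r,x}|}\int_{S_{r,x}} K_\mu\, d\sigma.$$
Letting $A := \sup I \cdot \sup \bar K$ — exactly the product appearing in brackets in the theorem — this becomes $\phi'/\phi \geq 2/(Ar)$, and integration over $[r, R]$ gives $\phi(r,x) \leq C r^{2\alpha}$ with $\alpha = 1/A$, uniformly in $x$ on compact subsets.

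To promote this area bound to pointwise Hölder continuity, observe that any closed curve of length $\ell$ has diameter at most $\ell/2$ (two arcs connecting any pair of points each have length $\geq$ the chord), so the oscillation $M(r,x) := \sup_{z \in S_{r,x}} |f(z) - f(x)|$ satisfies $M(r,x) \leq \ell(r,x)/2$. Combined with the Cauchy--Schwarz estimate, this gives $M(r,x)^2 \leq (\pi r\, \bar K/2)\, \phi'(r,x)$. Since $\phi'$ is not pointwise controlled by $\phi/r$, I would invoke the mean value theorem: for each $r > 0$ there is some $r^* \in [r/2, r]$ with $\phi'(r^*, x) \leq 2\phi(r,x)/r \leq C' r^{2\alpha - 1}$, whence $M(r^*, x) \leq C'' r^\alpha$; monotonicity of $M(\cdot, x)$ then gives $M(r/2, x) \leq M(r^*, x) \leq C'' r^\alpha$, yielding local $\alpha$-Hölder continuity.

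The main conceptual step is pairing Cauchy--Schwarz with the true isoperimetric ratio rather than the trivial bound $I \equiv 1$; setting $I \equiv 1$ recovers Ricciardi's estimate \eqref{eq:ricciardi_result} verbatim, so the refinement enters precisely through this substitution. The hard part I anticipate is the area-to-oscillation conversion: a pure area bound does not in general imply a diameter bound, and one must ensure the sharp exponent survives. The mean-value trick above does so at the cost only of the implicit constant $C$; for merely quasiregular $f$ (where $f(S_{r,x})$ need not be a Jordan curve), the bound $M \leq \ell/2$ remains valid by splitting the parameterization of the loop into two arcs.
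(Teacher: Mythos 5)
Your derivation of the differential inequality is exactly the paper's: the pointwise identity $|\partial_\tau f|^2 = J_f\,|1-\bar\eta^2\mu|^2/(1-|\mu|^2)$ along circles, Cauchy--Schwarz applied to the factorization $\sqrt{J_f}\sqrt{K_\mu}$, and insertion of the isoperimetric ratio reproduce the bound $\varphi(t) \le \tfrac12 A C\, t\,\varphi'(t)$ of \eqref{eq:big_equation}, and the integration to $\varphi(t)\le \varphi(1)\,t^{2/AC}$ is likewise identical, so the stated exponent comes out the same. Where you genuinely differ is the conversion of the area decay into a pointwise oscillation bound. The paper finishes in one line via quasisymmetry, $|f(te^{i\theta})-f(0)|^2 \sim_K |f(\mathbb{D}_t)|$, which leans on $f$ being a quasiconformal homeomorphism (for a general continuous $W^{1,2}_{\text{loc}}$ solution, i.e.\ a quasiregular map, one would route through Stoilow factorization). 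You instead reuse the length estimate: $M(r,x)\le \mathcal{H}^1\big(f(S_{r,x})\big)/2$ together with a pigeonhole choice of $r^*\in[r/2,r]$ with $\varphi'(r^*)\le 2\varphi(r)/r$. This is more elementary and applies directly to non-injective solutions, at the cost of two small points you should make explicit: the bound $|f(z)-f(x)|\le \ell/2$ for $z\in S_{r,x}$ is not just the diameter bound for the curve, since $f(x)$ need not lie on $f(S_{r,x})$ --- it requires $f(x)$ to lie in the convex hull of $f(S_{r,x})$, which follows because a nonconstant quasiregular map is open, hence $\partial f(\overline{\mathbb{D}}_{r,x})\subseteq f(S_{r,x})$ and a compact set is contained in the convex hull of its boundary; and the monotonicity of $M(\cdot,x)$ you invoke follows from the same observation (equivalently, take the supremum over the closed disk rather than the circle, where monotonicity is trivial and the same convex-hull bound applies). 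With those remarks added, your argument is complete and yields the theorem with the same exponent, and as a byproduct it avoids any appeal to quasisymmetry.
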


Here, the suprema can be regarded as the essential supremum over the radii $\rho$ for each point $x$. Note that the isoperimetric inequality guarantees that 
$$4 \pi \sup_{S_{\rho, x} \subset \Omega} \frac{|f(\mathbb{D}_{\rho, x})|}{\mathcal{H}^1\big(f(S_{\rho, x})\big)^2} \le 1$$
and so we recover the result \eqref{eq:ricciardi_result} of Ricciardi for the case of the homogeneous Beltrami equation; moreover, 
\begin{align*}
\int_{S_{\rho, x}} \frac{|1 - \bar{\eta}^2 \mu|^2}{1 - |\mu|^2} d\sigma &\le \int_{S_{\rho, x}} \frac{(1 + |\mu|)^2}{1 - |\mu|^2} d\sigma = \int_{S_{\rho, x}} \frac{1 + |\mu|}{1 - |\mu|} d\sigma \le K |S_{\rho, x}|
\end{align*}
for almost every circle (that is, for almost every positive radius). This recovers the classic exponent of $1/K$.

The theorem also shows that H\"older continuity of a quasiconformal map is locally determined by the structure of the Beltrami coefficient. For example, if there is an open set $E$ where $\|\mu \chi_E\|_{\infty} < \|\mu\|_{\infty}$, then the quasiconformal map displays better-than-expected H\"older continuity on the entirety of the open set. As will be proved later on, this idea also gives powerful constraints on the complex distortion of a map which has the worst-case H\"older continuity (even at a single point). It is worth mentioning that these results also follow from Stoilow factorization. We now turn to the proof of Theorem \ref{thm:main_holder}.

\begin{proof}
Without loss of generality, we will look at circles centered at the origin. Our starting point will be an adaptation of a classical argument of Morrey \cite{Mor38}. To this end, define $\varphi(t) = \int_{\disk_t} J_f = |f(\disk_t)|$. If we can show that $\varphi(t) \le \varphi(1) t^{2c}$, then quasisymmetry shows that the worst case length distortion is controlled by
$$|f(te^{i\theta}) - f(0)|^2 \sim_K |f(\disk_t)| \le \varphi(1) t^{2c}$$
which implies a H\"older exponent no worse than $c$. Our task is therefore to estimate $\varphi$, which we will do by controlling $\varphi$ by its derivative. 

In order to do this, we will compute the circumference of the quasicircle $f(S_t)$ explicitly, where $S_t$ has radius $t$ and is centered at the origin. Parameterize the quasicircle by $\gamma(\theta) = f(te^{i\theta})$ for $\theta \in [0, 2\pi]$; note that for almost every $t$, the quasicircle has positive and finite length. Indeed, since $f \in W^{1, 2}_{\text{loc}}$, $f$ is absolutely continuous on the circle $S_t$ for almost every $t \in [0, \infty)$. Likewise, since the Beltrami coefficient $\mu$ is defined almost everywhere with respect to the area measure, Fubini's theorem guarantees that $\mu$ is defined at almost every point (with respect to arclength) on almost every circle. We also have that $f_{\zbar} = \mu f_z$ almost everywhere in the plane, so almost everywhere on almost every circle. Thus, we can compute the length by
$$\operatorname{Length} = \int_0^{2\pi} \left|\frac{d}{d\theta} \gamma(\theta)\right| \, d\theta.$$
Writing $f = u + iv$, we have that
\begin{align}
\frac 1 {t^2} \left|\frac{d}{d\theta} \gamma\right|^2 &= \frac 1 {t^2} \left|\frac{d}{d\theta} \left[u(t \cos \theta, t \sin \theta) + i v(t \cos \theta, t \sin \theta)\right]\right|^2 \nonumber \\
&= \frac 1 {t^2} \left|-t u_x \sin \theta + t u_y \cos \theta + i \big[-t v_x \sin \theta + t v_y \cos \theta\big]\right|^2 \nonumber \\
&= u_x^2 \sin^2 \theta + u_y^2 \cos^2 \theta - 2u_x u_y \sin \theta \cos \theta \\
&\quad\quad + v_x^2 \sin^2 \theta + v_y^2 \cos^2 \theta - 2 v_x v_y \sin \theta \cos \theta \nonumber \\
&= |f_x|^2 \sin^2 \theta + |f_y|^2 \cos^2 \theta - 2 \sin \theta \cos \theta \big(u_x u_y + v_x v_y\big) \label{eq:length_squared}
\end{align}
If we write the Beltrami equation in terms of $x-$ and $y-$derivatives rather than the Wirtinger derivatives, we find that
\begin{equation}\label{eq:second_term}
f_x + i f_y = \mu (f_x - i f_y) \implies f_x (1 - \mu) = -i f_y (1 + \mu) \implies f_y = i \frac{1 - \mu}{1 + \mu} f_x.
\end{equation}
Furthermore,
$$\real (\overline{f_x} f_y) = \real\big((u_x - i v_x)(u_y + i v_y)\big) = u_x u_y + v_x v_y.$$
Thus, the final term in \eqref{eq:length_squared} can be replaced with
\begin{align}
u_x u_y + v_x v_y &= \real (\overline{f_x} f_y) \nonumber \\
&= \real \left(\overline{f_x} i \frac{1 - \mu}{1 + \mu} f_x\right) \nonumber \\
&= |f_x|^2 \left(-\imag \frac{1 - \mu}{1 + \mu}\right) \nonumber \\
&= -\frac{|f_x|^2}{|1 + \mu|^2} \imag\big((1 - \mu)(1 + \overline{\mu})\big) \nonumber \\
&= -\frac{|f_x|^2}{|1 + \mu|^2} \imag(\overline{\mu} - \mu) \nonumber \\
&= \frac{2|f_x|^2 \imag \mu}{|1 + \mu|^2} \label{eq:third_term}
\end{align}
We also wish to rewrite $f_x$ in terms of the Jacobian, so as to relate $\varphi$ to $\varphi'$. We have
$$J_f = |f_z|^2 - |f_{\zbar}|^2 = (1 - |\mu|^2) |f_z|^2.$$
On the other hand, $f_x = f_z + f_{\zbar} = (1 + \mu) f_z$, so that
\begin{equation}\label{eq:jacobian_fx}
J_f = \frac{1 - |\mu|^2}{|1 + \mu|^2} |f_x|^2
\end{equation}
Combining the equations (\ref{eq:length_squared})-(\ref{eq:jacobian_fx}), we find that the length $\mathcal{H}^1\big(f(S_t)\big)$ of the quasicircle $f(S_t)$ can be written as
\begin{align}
&\int_0^{2\pi} \frac{|1 + \mu|}{\sqrt{1 - |\mu|^2}} J_f^{1/2} \sqrt{\sin^2 \theta + \left|\frac{1 - \mu}{1 + \mu}\right|^2 \cos^2 \theta - 4 \sin \theta \cos \theta \frac{\imag \mu}{|1 + \mu|^2}} \, t d\theta \nonumber \\
&= \int_0^{2\pi} \frac{J_f^{1/2}}{\sqrt{1 - |\mu|^2}} \sqrt{|1 + \mu|^2 \sin^2 \theta + |1 - \mu|^2 \cos^2 \theta - 4 \sin \theta \cos \theta \imag \mu} \, t d\theta. \label{eq:sqrt_jacobian}
\end{align}
It remains to simplify the term within the square root. We can expand it to find that 
\begin{align}
|1 + \mu|^2 \sin^2 \theta + &|1 - \mu|^2 \cos^2 \theta - 4 \sin \theta \cos \theta \imag \mu  \nonumber \\
&= 1 + |\mu|^2 + 2 \real \mu \sin^2 \theta - 2 \real \mu \cos^2 \theta - 4 \sin \theta \cos \theta \imag \mu \nonumber \\
&= 1 + |\mu|^2 - 2 \real \mu \cos 2 \theta - 2 \imag \mu \sin 2 \theta \nonumber \\
&= 1 + |\mu|^2 - 2 \real \big((\cos 2\theta - i \sin 2 \theta)(\real \mu + i \imag \mu)\big) \nonumber \\
&= 1 + |\mu|^2 - 2 \real(e^{-2i\theta} \mu) \nonumber \\
&= |1 - e^{-2i\theta} \mu|^2
\end{align}
Noting that $e^{i\theta} = \eta$ is the outer normal from the circle, combining this with (\ref{eq:sqrt_jacobian}) we arrive at
\begin{equation}\label{eq:final_length}
\mathcal{H}^1\big(f(S_t)\big) = \int_0^{2\pi} \left(\frac{|1 - \overline{\eta}^2 \mu|^2}{1 - |\mu|^2}\right)^{1/2} J_f^{1/2} \, t \, d\theta.
\end{equation}
We are now ready to make the estimate of $\varphi$. Denote
$$A = 4 \pi \sup_{t} \frac{|f(\mathbb{D}_{t})|}{\mathcal{H}^1\big(f(S_{t})\big)^2} \quad \text{ and } \quad C = \sup_t \frac{1}{|S_t|} \int_{S_t} \frac{|1 - \overline{\eta}^2 \mu|^2}{1 - |\mu|^2}\, d\sigma. $$
recalling that $d\sigma = t d\theta$ is arclength. We then have for almost every $t$ that
\begin{align}
\varphi(t) &= \frac{|f(\disk_t)|}{\mathcal{H}^1\big(f(S_t)\big)^2} \mathcal{H}^1\big(f(S_t)\big)^2 \nonumber \\
&\le \frac{1}{4\pi} A \left(\int_0^{2\pi} \left(\frac{|1 - \overline{\eta}^2 \mu|^2}{1 - |\mu|^2}\right)^{1/2} J_f^{1/2} \, t \, d\theta\right)^2 \nonumber \\
&\le \frac{1}{4\pi} A \int_0^{2\pi} \frac{|1 - \overline{\eta}^2 \mu|^2}{1 - |\mu|^2} \, t\,  d\theta \int_0^{2\pi} J_f \, t \, d\theta \nonumber \\
&= \frac 1 2 At \left(\frac{1}{2 \pi t} \int_{S_t} \frac{|1 - \overline{\eta}^2 \mu|^2}{1 - |\mu|^2}\, d\sigma \right) \left(\int_{0}^{2\pi} J_f \, t \, d\theta \right) \nonumber \\
&\le \frac 1 2 AC t \varphi'(t) \label{eq:big_equation}
\end{align}
for almost every $t$. We then find that
$$\frac{d}{dt} \left[t^{-2/AC} \varphi(t)\right] = t^{-2/AC - 1} \left[-\frac{2}{AC} \varphi(t) + t \varphi'(t)\right] \ge 0.$$
almost everywhere. Integrating this inequality over $[t, 1]$ leads to $\varphi(t) \le \varphi(1) t^{2/AC}$, which is the desired result.
\end{proof}

%
%

\section{Extremizers for H\"older Continuity}
Here we will study the structure of the Beltrami equation for the extremizers of H\"older continuity. Recall that by Mori's Theorem, a $K$-quasiconformal map is at least $\frac 1 K$-H\"older continuous. We will show that, in some sense, the extremizers must have Beltrami coefficients which are very close to the coefficient for a pure radial stretch. Denote $k = \frac{K - 1}{K + 1}$; since the Beltrami coefficient has $|\mu| \le k$, we can always write
\begin{align}
\mu(z) = e^{2i \theta} \left(-k + \epsilon(z)\right) \label{eq:coefficient_format}
\end{align}
with $\theta$ being the argument of $z$, and $\epsilon$ some function such that has nonnegative real part. Note that $-k e^{2i\theta}$ is precisely the Beltrami coefficient of the radial stretch $z|z|^{1/K - 1}$. We now have our result:

\begin{theorem}\label{thm:extremizer_beltrami}
Suppose that $f$ is $K$-quasiconformal and an extremizer for H\"older continuity at the origin. Write the Beltrami coefficient in the form \eqref{eq:coefficient_format}. Then there is a sequence of scales $t_n \to 0$ for which
$$\int_{t_n}^1 \frac 1 r \int_{S_r} \real \epsilon(z) \, d\tau \, dr = o\left(\log \frac 1 {t_n}\right)$$
where $d\tau = \frac{d\sigma}{2\pi r}$ is normalized arclength.
\end{theorem}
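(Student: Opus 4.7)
The idea is to upgrade Theorem~\ref{thm:main_holder} to a scale-by-scale differential inequality and then use the extremizer hypothesis to force it to be tight on a sequence of radii; the relevant information about $\real\epsilon$ is then extracted by one algebraic computation.

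First, I would re-examine the proof of Theorem~\ref{thm:main_holder} without passing to suprema. Reading \eqref{eq:big_equation} as a pointwise estimate gives
\[
\varphi(t) \le \tfrac{1}{2}\, A(t)\, C(t)\, t\, \varphi'(t)
\]
for almost every $t$, where $\varphi(t) = |f(\mathbb{D}_t)|$, $A(t) = 4\pi |f(\mathbb{D}_t)| / \mathcal{H}^1(f(S_t))^2$, and $C(t) = |S_t|^{-1}\int_{S_t}\frac{|1-\bar\eta^2\mu|^2}{1-|\mu|^2}\,d\sigma$. Integrating the resulting log-derivative inequality from $t$ to $1$ gives
\[
\varphi(t) \le \varphi(1) \exp\!\left(-\int_t^1 \frac{2}{A(r)C(r)}\,\frac{dr}{r}\right).
\]

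Next I would exploit the extremizer hypothesis. Normalising so $f(0)=0$, Definition~\ref{defn:extremizer} produces, for each $\epsilon > 0$, a sequence $r_n(\epsilon) \to 0$ with $|f(r_n)| \ge r_n^{1/K+\epsilon}$. Combined with the quasisymmetric comparison $\varphi(r) \gtrsim_K |f(re^{i\theta})|^2$ already invoked in the proof of Theorem~\ref{thm:main_holder}, taking logarithms of the display above and subtracting $\int_{r_n}^1 dr/(Kr)$ from both sides yields
\[
\int_{r_n}^1 \frac{1}{r}\!\left(\frac{1}{A(r)C(r)} - \frac{1}{K}\right) dr \le C(f,K) + \epsilon\, \log\tfrac{1}{r_n}.
\]

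The core step is a pointwise inequality linking the Ricciardi integrand to $\real\epsilon$. With $\mu(z) = e^{2i\theta}(-k + \epsilon(z))$ as in \eqref{eq:coefficient_format}, we have $\bar\eta^2\mu = -k + \epsilon(z)$ on $S_r$. Expanding and applying the identities $K(1-k) = 1+k$ and $k(K+1) = K-1$ produces
\[
K(1-|\mu|^2) - |1-\bar\eta^2\mu|^2 = 2K\,\real\epsilon - (K+1)|\epsilon|^2.
\]
The constraint $|\mu|\le k$ is equivalent to $|\epsilon|^2 \le 2k\,\real\epsilon$; substituting and simplifying gives the clean bound
\[
\frac{|1-\bar\eta^2\mu|^2}{1-|\mu|^2} \le K - 2\,\real\epsilon(z).
\]
Averaging on $S_r$ and using $A(r)\le 1$ yields $A(r)C(r) \le K - 2 E(r)$ with $E(r) := \int_{S_r}\real\epsilon\, d\tau$, whence $\frac{1}{A(r)C(r)} - \frac{1}{K} \ge \frac{2 E(r)}{K^2}$.

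Plugging this into the preceding display gives $\int_{r_n}^1 \frac{E(r)}{r}\,dr \le C(f,K) + \tfrac{K^2\epsilon}{2}\log\tfrac{1}{r_n}$ for every $\epsilon > 0$. A standard diagonal choice (take $\epsilon_m = 1/m$ together with $t_m := r_{N(m)}(\epsilon_m)$ small enough that $C(f,K) \le m^{-1}\log(1/t_m)$) delivers a single sequence $t_n \to 0$ with $\int_{t_n}^1 E(r)\,dr/r = o(\log(1/t_n))$, which is the desired conclusion. The main obstacle is the pointwise inequality $|1-\bar\eta^2\mu|^2/(1-|\mu|^2) \le K - 2\,\real\epsilon$: this is where the ellipticity constraint $|\mu|\le k$ is exploited in an essential way, and it is what converts the refined bound of Theorem~\ref{thm:main_holder} into a quantitative statement about $\epsilon$.
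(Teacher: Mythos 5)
Your proposal is correct and follows essentially the same route as the paper's proof: the scale-by-scale differential inequality extracted from the proof of Theorem \ref{thm:main_holder} (with $A(r)\le 1$ from the isoperimetric inequality), integration of the logarithmic derivative, the extremizer hypothesis applied along a sequence of radii, and a final diagonal choice of scales. Your key pointwise bound $\frac{|1-\bar{\eta}^2\mu|^2}{1-|\mu|^2}\le K-2\real\epsilon$ plays exactly the role of the paper's estimate \eqref{eq:estimate_g}, and is in fact derived a bit more carefully, since your identity together with $|\epsilon|^2\le 2k\,\real\epsilon$ handles complex $\epsilon$ directly, whereas the paper's computation reduces to a real perturbation $w=-k+\real\epsilon$.
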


Morally, this says that the circular averages of $\real \epsilon(z)$ are tending to zero in some quantitative sense - otherwise, the integral $\int_{t_n}^1 \frac{dr}{r}$ would give some non-zero fraction of $\log 1/t_n$. 

\begin{proof}
We proceed through two steps; the first is to estimate the impact that our perturbation by $\epsilon$ has on
$$g(t) := \int_{S_r} \frac{|1 - e^{-2i\theta} \mu|^2}{1 - |\mu|^2} \, d\tau$$
and the second is to sharpen the estimate of $|f(\disk_t)|$ accordingly. Recall that Theorem \ref{thm:main_holder} (and in particular equation \eqref{eq:big_equation} with the estimate $A \le 1$) tells us that
$$\varphi(r) \le \frac {r g(r)}{2} \varphi'(r)$$
for almost every $r \in [0, 1]$. Integrating this differential inequality, we find that
\begin{align}
\ln \frac{\varphi(1)}{\varphi(t)} = \int_t^1 \frac{\varphi'(r)}{\varphi(r)}\, dr &\ge \int_t^1 \frac{2}{rg(r)} \, dr \label{eq:estimate_phi}
\end{align}
Later on, we will prove that there is a constant $c_1 > 0$ such that 
\begin{align}
g(r) \le K - c_1 \int_{S_r} \real \epsilon \, d\tau \label{eq:estimate_g}
\end{align}
so that
$$\frac 1 {g(r)} \ge \frac 1 K \cdot \frac{1}{1 - (c_1/K) \int_{S_r} \real \epsilon \, d\tau} \ge \frac 1 K \left[1 + c \int_{S_r} \real \epsilon \, d\tau\right]$$
with $c = c_1 / K > 0$. With this estimate in mind, we can continue \eqref{eq:estimate_phi} to find that
\begin{align}
\ln \frac{\varphi(1)}{\varphi(t)} &\ge \int_t^1 \frac 2 r \cdot \frac 1 K \left[1 + c \int_{S_r} \real \epsilon \, d\tau\right] \, dr \nonumber \\
&= \ln t^{-2/K} + \frac{2c}{K} \int_t^1 \frac 1 r \int_{S_r} \real \epsilon \, d\tau dr
\end{align}
Rearranging this gives us
\begin{align}
\varphi(t) \le \varphi(1) t^{2/K} \exp \left(-2\frac{c}{K} \int_t^1 \frac 1 r \int_{S_r} \real \epsilon \, d\tau dr\right)
\end{align}
Now if $f$ is no more regular than $1/K$-H\"older continuous and $\gamma > 0$, there is a sequence of scales $r_n \to 0$ (depending on $\gamma$) for which $\varphi(r_n) \ge r_n^{2/K + \gamma} \varphi(1)$ for all $n$. We therefore have
\begin{align}
r_n^{2/K + \gamma} \le r_n^{2/K} \exp \left(-2\frac{c}{K} \int_{r_n}^1 \frac 1 r \int_{S_r} \real \epsilon \, d\tau dr\right)
\end{align}
which is the key estimate behind our constraint on the structure of $\real \epsilon$. Equivalently,
\begin{align}
\gamma \log r_n &\le - 2 \frac{c}{K} \int_{r_n}^1 \frac 1 r \int_{S_r} \real \epsilon \, d\tau dr \nonumber \\
\implies \frac{\gamma K}{2c} \log \frac 1 {r_n} &\ge \int_{r_n}^1 \frac 1 r \int_{S_r} \real \epsilon \, d\tau dr
\end{align}
Taking a sequence of $\gamma_m \to 0$ and choosing scales $t_m$ appropriately gives the desired estimate.

All that remains now is to show \eqref{eq:estimate_g}. Writing $\mu = e^{2i\theta} (-k + \real \epsilon)$ and choosing $w = -k + \real \epsilon$ we have that
\begin{align}
\frac{|1 - e^{-2i\theta} \mu|^2}{1 - |\mu|^2} &= \frac{|1 - w|^2}{1 - |w|^2}. \nonumber
\end{align}
We will show that
\begin{align}
\frac{|1 - w|^2}{1 - |w|^2} - \frac{1 + k}{1 - k} \lesssim -\real \epsilon
\end{align}
where the implied constant only depends on $k$; the estimate \eqref{eq:estimate_g} follows immediately from integrating this over $S_r$, recalling that $\frac{1 + k}{1 - k} = K$ and that $d\tau$ is a probability measure. It is worth mentioning that the estimate $|1 - w|^2 / (1 - |w|^2) - (1 + k) / (1 - k) \le 0$ is immediate from the triangle inequality, but that we need to sharpen it a little bit. To carry out this estimate, observe that since $w$ is real we have
\begin{align}
\frac{|1 - w|^2}{1 - |w|^2} - \frac{1 + k}{1 - k} &= \frac{1 - w}{1 + w} - \frac{1 + k}{1 - k} \nonumber \\
&= \frac{(1 - w)(1 - k) - (1 + k)(1 + w)}{(1 + w)(1 - k)} \nonumber \\
&= \frac{-2(w + k)}{(1 + w)(1 - k)} \nonumber \\
&= \frac{-2 \real \epsilon}{(1 + w)(1 - k)} \label{eq:simplified_w_eps} 
\end{align}
Now $w \le k$ since $\epsilon(z)$ lies within the disk centered at $k$ with radius $k$, and so we have
$$\frac{|1 - w|^2}{1 - |w|^2} - \frac{1 + k}{1 - k} \le \frac{-2 \real \epsilon}{(1 + k)(1 - k)}$$
which proves \eqref{eq:estimate_g}.
\end{proof}

In a similar manner, we can study the local distortion properties of an extremizer. Infinitesimally, a quasiconformal map must take circles to ellipses with bounded eccentricity; we will show that (in an appropriate sense), the extremizers for H\"older continuity must almost take circles to circles. Our approach to this is similar to the previous theorem: we will write $|f(\mathbb{D}_t)| / \mathcal{H}^1(f(S_t))^2$ as a perturbation of $1/4\pi$ and control the perturbation.

\begin{theorem}\label{thm:extremizer_distortion}
Suppose that $f$ is $K$-quasiconformal and an extremizer for H\"older continuity at the origin. Define functions $h$ and $\delta$ by 
$$h(t) := \frac{|f(\mathbb{D}_t)|}{\mathcal{H}^1(f(S_t))^2} = \frac{1}{4\pi (1 + \delta(t))}.$$
Then there is a sequence $t_n \to 0^+$ such that
$$\int_{t_n}^1 \frac{\delta(r)}{r} \, dr = o\left(\log \frac 1 {t_n}\right).$$
\end{theorem}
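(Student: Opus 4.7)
The plan is to mirror the strategy of Theorem~\ref{thm:extremizer_beltrami}, routing the improvement through the geometric factor $4\pi h(t)$ rather than through the Beltrami-coefficient integrand. The starting point is the scale-by-scale (rather than supremum) version of \eqref{eq:big_equation} from the proof of Theorem~\ref{thm:main_holder}: for almost every $t$,
$$\varphi(t) \le \tfrac{1}{2}\,[4\pi h(t)]\left(\tfrac{1}{|S_t|}\int_{S_t}\tfrac{|1-\overline{\eta}^2\mu|^2}{1-|\mu|^2}\,d\sigma\right) t\, \varphi'(t).$$
Bounding the circular average of $|1-\overline{\eta}^2\mu|^2/(1-|\mu|^2)$ by $K$ (as in the classical recovery of the $1/K$-exponent) and substituting $4\pi h(t) = 1/(1+\delta(t))$ rearranges to
$$\frac{\varphi'(t)}{\varphi(t)} \ge \frac{2(1+\delta(t))}{tK}.$$

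Next, I would integrate this differential inequality from $t$ to $1$ to obtain
$$\varphi(t) \le \varphi(1)\, t^{2/K}\exp\left(-\tfrac{2}{K}\int_t^1 \tfrac{\delta(r)}{r}\,dr\right),$$
the geometric analogue of the key estimate in the proof of Theorem~\ref{thm:extremizer_beltrami}. In contrast to that theorem, no Taylor-type expansion is needed here: since $\delta\ge 0$ by the isoperimetric inequality, the perturbation enters with a favorable sign automatically.

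To extract the sequence $t_n$, I would invoke Definition~\ref{defn:extremizer}: for each $\gamma>0$ there is a sequence $r_n\to 0$ with $|f(r_n)-f(0)|\ge r_n^{1/K+\gamma}$, and quasisymmetry together with the inscribed-disk comparison yields $\varphi(r_n)\gtrsim_K r_n^{2/K+2\gamma}$. Combined with the integrated bound and taking logarithms, this gives
$$\int_{r_n}^1 \frac{\delta(r)}{r}\,dr \le K\gamma\, \log\frac{1}{r_n} + O(1),$$
and a diagonal choice over $\gamma_m\to 0$ produces scales $t_m\to 0^+$ satisfying $\int_{t_m}^1 \delta(r)/r\,dr = o(\log 1/t_m)$. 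The only step not already present in Section~2 is verifying that \eqref{eq:big_equation} holds pointwise at each admissible scale $t$; this is immediate from re-reading its derivation, which only uses that $f$ is absolutely continuous on $S_t$ and that $\mu$ is defined a.e.\ on $S_t$, both of which hold for a.e.\ $t$ by Fubini.
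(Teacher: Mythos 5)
Your proposal is correct and follows essentially the same route as the paper: the scale-by-scale form of \eqref{eq:big_equation} with the circular average bounded by $K$ gives $\varphi(t) \le 2\pi K t\, h(t)\,\varphi'(t)$, integrating yields $\varphi(t) \le \varphi(1)\, t^{2/K}\exp\bigl(-\tfrac{2}{K}\int_t^1 \tfrac{\delta(r)}{r}\,dr\bigr)$, and comparison with the extremizer lower bound plus a diagonal choice of $\gamma_m \to 0$ gives the conclusion. Your extra care in passing from Definition~\ref{defn:extremizer} to $\varphi(r_n)\gtrsim_K r_n^{2/K+2\gamma}$ via quasisymmetry, with the resulting $O(1)$ absorbed in the diagonalization, is a harmless refinement of what the paper states directly.
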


\begin{proof}
As a first remark, the isoperimetric inequality shows that $h(t) \le \frac 1 {4\pi}$ for all $t$, so $\delta(t) \ge 0$ everywhere; also, $\delta(t) < \infty$ almost everywhere. In analogy with the previous theorem, we can conclude from \eqref{eq:big_equation} that
$$\varphi(t) \le 2\pi K t h(t) \varphi'(t).$$
Rearranging and integrating leads to
\begin{align}
\varphi(t) \le \varphi(1) \exp \left(-\int_{t}^1 \frac{dr}{2\pi Kr h(r)}\right) \label{eq:upper_bound_distortion_integral}
\end{align}
Now since $f$ is an extremizer for H\"older continuity, for any sequence $\gamma_n \to 0^+$ there is a sequence of scales $t_n \to 0^+$ for which $\varphi(t_n) \ge \varphi(1) t_n^{2/K (1 + \gamma_n)}$; combining this with \eqref{eq:upper_bound_distortion_integral} leads to
$$t_n^{\frac 2 K (1 + \gamma_n)} \le \exp \left(-\int_{t_n}^1 \frac{dr}{2\pi Kr h(r)}\right).$$
Taking a logarithm and rearranging, we find that
\begin{align}
\frac 2 K (1 + \gamma_n) \log t_n &\le - \int_{t_n}^1 \frac{dr}{2\pi K r h(r)}\nonumber \\
&= - \int_{t_n}^1 \frac{dr}{2\pi K r \frac{1}{4\pi(1 + \delta(r))}}\nonumber \\
&= - \frac 2 K \int_{t_n}^1 \frac{1 + \delta(r)}{r} \, dr \nonumber \\
&= \frac 2 K \log t_n - \frac 2 K \int_{t_n}^1 \frac{\delta(r)}{r} \, dr. \label{eq:distortion_integral_simplified}
\end{align}
Rearranging this leads to
$$\int_{t_n}^1 \frac{\delta(r)}{r} \, dr \le \gamma_n \log \frac 1 {t_n}$$
as desired.
\end{proof}

\begin{corl}\label{corl:extremizer_distortion_density}
Suppose $f$ is an extremizer for H\"older continuity at the origin and $\delta$ is defined as in Theorem \ref{thm:extremizer_distortion}. Then for any $\delta_0 > 0$, the set $\{r : \delta(r) > \delta_0\}$ has zero lower density at $0$.
\end{corl}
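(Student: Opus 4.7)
\medskip
\noindent\textbf{Proof proposal.} The plan is to combine Theorem \ref{thm:extremizer_distortion} with Chebyshev's inequality to bound the logarithmic measure of the bad set $E := \{r \in (0, 1] : \delta(r) > \delta_0\}$ along the extremizer subsequence, and then convert this logarithmic-measure bound into a statement about Lebesgue lower density at $0$.

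The first step is essentially bookkeeping. By Theorem \ref{thm:extremizer_distortion} there is a sequence $t_n \to 0^+$ along which $\int_{t_n}^1 \delta(r)/r\, dr = o(\log(1/t_n))$. Since $\delta \ge 0$, Chebyshev's inequality gives
$$\delta_0 \int_{E \cap [t_n, 1]} \frac{dr}{r} \;\le\; \int_{t_n}^1 \frac{\delta(r)}{r}\, dr \;=\; o\!\left(\log \tfrac{1}{t_n}\right),$$
so $E$ occupies vanishing logarithmic measure in $[t_n, 1]$ relative to the total logarithmic measure $\log(1/t_n)$ of that interval.

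To convert this to a Lebesgue-density statement at $0$, I would argue by contradiction. Suppose $\eta := \liminf_{s \to 0^+} |E \cap (0, s)|/s > 0$. Then $|E \cap (0, s)| \ge (\eta/2) s$ for all $s \in (0, s_0)$ with $s_0$ small, and consequently $|E \cap (t_n, r)| \ge (\eta/2) r - t_n$ for $r \in (t_n, s_0)$ once $t_n$ is small enough. Integration by parts (equivalently, Fubini with $G(r) := |E \cap (t_n, r)|$) yields the identity
$$\int_{E \cap (t_n, 1)} \frac{dr}{r} \;=\; |E \cap (t_n, 1)| + \int_{t_n}^1 \frac{|E \cap (t_n, r)|}{r^2}\, dr,$$
and inserting the lower bound on $|E \cap (t_n, r)|$ produces the estimate $\int_{E \cap (t_n, 1)} dr/r \ge (\eta/2)\log(1/t_n) - O(1)$. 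For large $n$ this contradicts the Chebyshev bound above, forcing $\eta = 0$.

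There is no serious obstacle here: Theorem \ref{thm:extremizer_distortion} does all the substantive work, and the only content of the corollary beyond Chebyshev is the elementary fact that positive Lebesgue lower density at $0$ forces positive logarithmic lower density there, handled by the integration-by-parts step above.
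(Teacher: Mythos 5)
Your proposal is correct, and it follows the same contradiction skeleton as the paper's proof: assume the bad set $E=\{r:\delta(r)>\delta_0\}$ has positive lower density $\eta>0$ at the origin and show this forces $\int_{t}^1 \delta(r)/r\,dr$ to grow like a fixed multiple of $\log(1/t)$, contradicting Theorem \ref{thm:extremizer_distortion} along the sequence $t_n$. The only difference is the mechanism converting positive Lebesgue lower density into logarithmic growth. The paper decomposes $[t,1]$ into geometric blocks $[\gamma/M^{k+1},\gamma/M^k]$ with $M=M(\eta)$ chosen so that each block carries a definite fraction of $E$, and sums the uniform contribution $\delta_0\eta/100$ per block; you instead apply Chebyshev to pass to the logarithmic measure of $E$ and then use the exact identity
$$\int_{E\cap(t_n,1)}\frac{dr}{r}\;=\;|E\cap(t_n,1)|+\int_{t_n}^1\frac{|E\cap(t_n,r)|}{r^2}\,dr,$$
feeding in $|E\cap(t_n,r)|\ge(\eta/2)r-t_n$ for $r<s_0$ (and simply discarding the nonnegative contribution from $r\ge s_0$), which gives $\int_{E\cap(t_n,1)}dr/r\ge(\eta/2)\log(1/t_n)-O(1)$. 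Both conversions are valid: yours avoids the auxiliary scale parameter $M$ and the somewhat arbitrary constant $\eta/100$, giving a slightly cleaner quantitative bound, while the paper's block decomposition makes the multiscale heuristic (each scale of $[t,1]$ contributes a fixed amount to the integral) more visible. No gaps; the integration-by-parts identity is justified since $G(r)=|E\cap(t_n,r)|$ is Lipschitz with $G(t_n)=0$.
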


\begin{proof}
Fix $\delta_0 > 0$ and suppose, intending a contradiction, that the lower density of $\{r : \delta(r) > \delta_0\}$ had lower density at least $\eta > 0$ at zero. Then there exists a scale $\epsilon > 0$ such that for all $\gamma < \epsilon$,
$$\frac{|\{r : \delta(r) > \delta_0\} \cap [0, \gamma]\}|}{\gamma} > \frac{\eta}{2}.$$
Consequently, there exists an $M$ depending only on $\eta$ such that for all $\gamma < \epsilon$,
$$\frac{|\{r : \delta(r) > \delta_0\} \cap [\gamma/M, \gamma]\}|}{\gamma} > \frac{\eta}{100}.$$
Therefore, we can estimate that
\begin{align}
\int_{\gamma/M}^{\gamma} \frac{\delta(r)}{r} \, dr &\ge \frac{\delta_0}{\gamma} \left|[\gamma/M, \gamma] \cap \{r : \delta(r) > \delta_0\}\right| \nonumber \\
&> \frac{\delta_0}{\gamma} \cdot \frac{\gamma \eta}{100} \nonumber \\
&= \frac{\delta_0 \eta}{100} \label{eq:single_scale}
\end{align}
Summing \eqref{eq:single_scale} over intervals $[\gamma/M, \gamma], [\gamma/M^2, \gamma/M]$ and so on, it is immediate that 
$$\int_t^1 \frac{\delta(r)}{r} \, dr \gtrsim \log \frac 1 t$$
for all sufficiently small $t$. This contradicts the result of Theorem \ref{thm:extremizer_distortion} and the corollary follows.
\end{proof}

Finally, as usual, there is a natural generalization of the quasiconformal result to the quasiregular result.
\begin{theorem}\label{thm:quasiregular_extremizers}
A $K$-quasiregular map $g$ is an extremizer for H\"older continuity at the origin if and only if it is of the form $g = \Phi \circ f$, where $f$ is $K$-quasiconformal and an extremizer for H\"older continuity at the origin, and $\Phi$ is conformal in a neighborhood of $f(0)$.
\end{theorem}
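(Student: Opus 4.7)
The plan is to reduce the quasiregular case to the quasiconformal case via Stoilow factorization: any $K$-quasiregular map $g$ on $\Omega$ admits a decomposition $g = \Phi \circ f$ where $f : \Omega \to f(\Omega)$ is a $K$-quasiconformal homeomorphism and $\Phi$ is holomorphic on $f(\Omega)$. Once this factorization is in hand, the theorem becomes the equivalence between extremality of $g$ at $0$ and the pair of conditions that (i) $f$ is an extremizer at $0$, and (ii) $\Phi'(f(0)) \neq 0$.

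For the backward direction, assume $f$ is an extremizer at $0$ and $\Phi$ is conformal on a neighborhood $U$ of $f(0)$. Shrinking $U$ if necessary, I may take $\Phi|_U$ to be bilipschitz with uniform constants $0 < c \le C < \infty$. Continuity of $f$ ensures $f(z) \in U$ for $z$ near $0$, and the lower bound $|g(z) - g(0)| \ge c |f(z) - f(0)|$ combined with the extremality sequence for $f$ immediately produces a sequence witnessing extremality for $g$ in the sense of Definition \ref{defn:extremizer}.

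For the forward direction, I would apply Stoilow factorization and let $m \ge 1$ denote the order of vanishing of $w \mapsto \Phi(w) - \Phi(f(0))$ at $w = f(0)$. If $\Phi$ fails to be conformal at $f(0)$, then $m \ge 2$, and $|\Phi(w) - \Phi(f(0))| \lesssim |w - f(0)|^m$ on some neighborhood of $f(0)$. Combining with Mori's theorem, $|f(z) - f(0)| \lesssim |z|^{1/K}$ for $z$ near $0$, this forces $|g(z) - g(0)| \lesssim |z|^{m/K} \le |z|^{2/K}$ on a full neighborhood of $0$. This bound contradicts Definition \ref{defn:extremizer} as soon as one takes $\epsilon < 1/K$, since no sequence $r_n \to 0$ could satisfy both $|g(r_n) - g(0)| \ge r_n^{1/K + \epsilon}$ and the universal upper bound $|z|^{2/K}$. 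Hence $\Phi'(f(0)) \neq 0$, and the bilipschitz argument from the backward direction now runs symmetrically to transfer extremality from $g$ to $f$.

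The main obstacle is the forward direction, specifically the dichotomy step that excludes critical points of $\Phi$ at $f(0)$. The key quantitative observation is that even a simple critical point (with $m = 2$) squares the local length scale, pushing the effective Hölder exponent of $g$ from $1/K$ all the way to $2/K$; the resulting gap of $1/K > 0$ is larger than any admissible perturbation $\epsilon < 1/K$ in Definition \ref{defn:extremizer}, producing the contradiction. Everything else amounts to chasing definitions and using the fact that a local bilipschitz change of coordinates preserves the pointwise Hölder exponent at a prescribed point.
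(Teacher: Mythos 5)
Your proposal is correct and follows essentially the same route as the paper: Stoilow factorization $g = \Phi \circ f$, exclusion of a critical point of $\Phi$ at $f(0)$ because an order-$m\ge 2$ zero would force a pointwise H\"older exponent of at least $2/K > 1/K$ at the origin, and transfer of extremality between $f$ and $g$ via the local (bi)Lipschitz behavior of $\Phi$ and $\Phi^{-1}$ near $f(0)$. The only cosmetic difference is that the paper transfers worst-case regularity from $g$ to $f$ using just smoothness of $\Phi$ before checking $\Phi'(f(0))\neq 0$, whereas you do it afterwards with the bilipschitz bound; both are fine.
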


\begin{proof}
Suppose $g$ is a $K$-quasiregular extremizer. By the Stoilow factorization theorem, there exists a holomorphic $\Phi$ and $K$-quasiconformal $f$ such that 
$$g = \Phi \circ f.$$
Since $\Phi$ is a smooth function, $\Phi \circ f$ is at least as regular as $f$ is (in particular, if $f$ is $\alpha$-H\"older continuous, then so is $g$); but since $g$ is not more than $1/K$-H\"older continuous, neither is $f$. Furthermore, we must have $\Phi'(f(0)) \ne 0$ (which implies conformality); otherwise, $\Phi(f(z)) - \Phi(f(0))$ vanishes to at least second order at $0$, and $\Phi \circ f$ is H\"older continuous at $0$ with exponent at least $\min\{1, 2/K\} > 1/K$.

On the other hand, if $g = \Phi \circ f$ with $f$ an extremizer and $\Phi$ conformal at $f(0)$, we can locally invert $\Phi$ as a smooth function, so that $f = \Phi^{-1} \circ g$. Hence $f$ is at least as regular as $g$ is; but since $f$ has the worst-case regularity, so must $g$.
\end{proof}
%
%

\section{Applications to Elliptic PDEs}
Next, we will use the relationship between quasiconformal maps and solutions to elliptic partial differential equations in order to deduce regularity results and classify the extremizers for H\"older continuity. The starting point for this work is the correspondence laid down in \cite{AstIwaMar09}, Chapter 16. Throughout, we will assume that $A(z)$ is a matrix valued function which is measurable, symmetric, and satisfies the ellipticity bound 
$$\frac 1 K |\xi|^2 \le \langle A(z) \xi, \xi \rangle \le K |\xi|^2$$
at almost every $z \in \Omega$. Note that this implies that $A(z)$ is positive definite, and an equivalent formulation is the unified inequality 
$$|\xi|^2 + |A(z) \xi|^2 \le \left(K + \frac 1 K\right)\langle A(z) \xi, \xi\rangle.$$
Consider the divergence form equation
\begin{equation}
\diverg A(z) \nabla u = 0. \label{eq:divergence_form}
\end{equation}
If $\Omega$ is a simply connected domain and $u \in W^{1, 2}_{\text{loc}}(\Omega)$ is a solution to \eqref{eq:divergence_form}, the Poincar\'e lemma guarantees that there exists an $A$-harmonic conjugate $v$; that is, $v \in W^{1, 2}_{\text{loc}}(\Omega)$ solves
$$\nabla v = \ast A(z) \nabla u$$
where $\ast$ is the Hodge star operator, viewed as the matrix
$$\ast = \left[\begin{array}{cc} 0 & -1 \\ 1 & 0 \end{array}\right].$$
Define $f = u + iv$; we now claim that the ellipticity condition implies that $f$ is $K$-quasiregular. Following Theorem \ref{thm:quasiregular_extremizers}, we will be able to use this to determine the regularity and extremizers for H\"older continuity. To see that $f$ is actually quasiregular, we may compute that
$$\|Df\|^2 = |\nabla u|^2 + |\nabla v|^2 = |\nabla u|^2 + |\ast A(z) \nabla u|^2 \le \left(K + \frac 1 K\right) \langle A(z) \nabla u, \nabla u\rangle.$$
It is immediate to check that $\langle A(z) \nabla u, \nabla u\rangle = J(z, f)$ is the Jacobian of $f$, and we therefore have $\|Df\|^2 \le (K + \frac 1 K) J(z, f)$. Rewriting the Hilbert-Schmidt norm in terms of Wirtinger derivatives, this implies that $|\dzbar f| \le \frac{K - 1}{K + 1} |\dz f|$ as desired. This brings us to the first theorem on regularity, which is a direct application of Theorem \ref{thm:main_holder}:

\begin{theorem}\label{thm:regularity_elliptic}
Let $u \in W^{1, 2}_{\text{loc}}(\Omega)$ be a continuous $W^{1, 2}_{\text{loc}}(\Omega)$ solution to \eqref{eq:divergence_form} on a simply connected domain $\Omega$, where $v$ is its $A$-harmonic conjugate, and $f = u + iv$. Let $\mu(z)$ denote the complex distortion of $f$. Then $u$ is $\alpha$-H\"older continuous with some exponent $\alpha$, where
$$\alpha \ge \left[4 \pi \sup_{S_{\rho, x} \subset \Omega} \frac{|f(\mathbb{D}_{\rho, x})|}{\mathcal{H}^1\big(f(S_{\rho, x})\big)^2} \sup_{S_{\rho, x} \subset \Omega} \frac 1 {|S_{\rho, x}|} \int \frac{|1 - \bar{\eta}^2 \mu|^2}{1 - |\mu|^2} d\sigma \right]^{-1}.$$
In particular, $u = \real f$ is H\"older continuous with exponent at least $1/K$.
\end{theorem}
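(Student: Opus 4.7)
The theorem is essentially a direct consequence of Theorem \ref{thm:main_holder} applied to the complex-valued function $f = u + iv$. My first reduction is the trivial pointwise inequality $|u(z_1) - u(z_2)| \le |f(z_1) - f(z_2)|$, which holds because $u = \real f$; this lets any $\alpha$-Hölder bound proved for $f$ descend to $u$ with the same exponent and the same constant. So it suffices to establish the stated Hölder estimate for $f$ itself.

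Next, the computation immediately preceding the theorem verifies that $f \in W^{1,2}_{\text{loc}}(\Omega)$ satisfies $|\dzbar f| \le \frac{K-1}{K+1} |\dz f|$ a.e., hence solves a Beltrami equation $\dzbar f = \mu \dz f$ with $\|\mu\|_\infty \le (K-1)/(K+1)$, for a Beltrami coefficient $\mu$ depending on $A$. Continuity of $f$ follows from the assumed continuity of $u$ together with interior regularity for the Hodge conjugate relation $\nabla v = \ast A \nabla u$ which propagates continuity from $u$ to $v$. I would then apply Theorem \ref{thm:main_holder} to $f$ directly. The one subtlety is that Theorem \ref{thm:main_holder} is phrased for $f : \Omega \to \Omega'$, but a generic quasiregular $f$ need not be a homeomorphism; its proof, however, uses the homeomorphism property only through the comparison $|f(te^{i\theta}) - f(0)|^2 \sim_K |f(\disk_t)|$. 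This comparison survives in the quasiregular setting by Stoilow factorization $f = \Phi \circ g$ (as invoked in Theorem \ref{thm:quasiregular_extremizers}), where $g$ is $K$-quasiconformal with the same complex distortion and $\Phi$ is holomorphic on $g(\Omega)$: $\Phi$ is locally Lipschitz, so the Hölder exponent passes from $g$ to $f$ with the same circle/disk data appearing in the bound. In particular, the conclusion of Theorem \ref{thm:main_holder} holds verbatim for $f$.

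For the final assertion that $\alpha \ge 1/K$, both factors in the bracketed expression are controlled by bounds already recorded right after Theorem \ref{thm:main_holder}. The isoperimetric inequality gives
$$4 \pi \sup_{S_{\rho,x} \subset \Omega} \frac{|f(\disk_{\rho,x})|}{\mathcal{H}^1(f(S_{\rho,x}))^2} \le 1,$$
while the elementary algebraic estimate
$$\int_{S_{\rho,x}} \frac{|1 - \bar\eta^2 \mu|^2}{1 - |\mu|^2} \, d\sigma \le \int_{S_{\rho,x}} \frac{1 + |\mu|}{1 - |\mu|} \, d\sigma \le K |S_{\rho,x}|$$
controls the second supremum by $K$. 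Multiplying yields an upper bound of $K$ on the bracketed quantity, and therefore $\alpha \ge 1/K$. The only non-routine point in the whole argument is the quasiregular-versus-quasiconformal distinction in the middle paragraph, and this is dispatched by Stoilow factorization; everything else is assembly of results already proved in Sections 2 and 3.
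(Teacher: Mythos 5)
Your proposal follows essentially the same route as the paper, which offers no separate proof: the displayed computation before the theorem shows $\|Df\|^2 \le (K + \tfrac1K) J(z,f)$, hence $f = u + iv$ is a continuous $W^{1,2}_{\text{loc}}$ solution of a Beltrami equation with $\|\mu\|_\infty \le \frac{K-1}{K+1}$, and Theorem \ref{thm:main_holder} is then applied directly to $f$; the reduction $|u(z_1) - u(z_2)| \le |f(z_1) - f(z_2)|$ and the $1/K$ conclusion via the isoperimetric bound and $\frac{|1 - \bar\eta^2\mu|^2}{1 - |\mu|^2} \le \frac{1 + |\mu|}{1 - |\mu|} \le K$ are exactly the remarks recorded after Theorem \ref{thm:main_holder}. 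Since that theorem is stated for continuous $W^{1,2}_{\text{loc}}$ solutions rather than homeomorphisms, the direct application is all the paper intends, and your main argument is correct. One caveat about your optional Stoilow detour: it is legitimate to worry that the proof of Theorem \ref{thm:main_holder} invokes quasisymmetry and the identity $\varphi(t) = |f(\disk_t)|$, which use injectivity, but the patch you propose does not deliver the stated bound. Applying Theorem \ref{thm:main_holder} to the quasiconformal factor $g$ in $f = \Phi \circ g$ yields an exponent governed by $\sup |g(\mathbb{D}_{\rho,x})| / \mathcal{H}^1(g(S_{\rho,x}))^2$, and since $\Phi$ is merely locally Lipschitz (not bi-Lipschitz, and it distorts areas and lengths non-uniformly), there is no general comparison between that quantity and the corresponding ratio for $f$ appearing in the theorem; so the claim that the conclusion holds ``verbatim'' for $f$ with the same circle/disk data is not justified by this route, and one must either invoke Theorem \ref{thm:main_holder} as stated or rerun Morrey's argument for quasiregular maps with multiplicity taken into account.
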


In a similar manner, we can find the extremizers for H\"older continuity.
\begin{theorem}\label{thm:elliptic_extremizer}
With the assumptions and notation of Theorem \ref{thm:regularity_elliptic}, suppose that $u$ is an extremizer for H\"older continuity at the origin. Then there is a harmonic map $\Phi$ and a $K$-quasiconformal map $g$ such that $g$ is an extremizer for H\"older continuity at the origin, $\Phi$ has non-vanishing gradient in a neighborhood of $g(0)$, and $f = \Phi \circ g$. In particular, the Beltrami coefficient and circular distortion of $g$ satisfy the bounds of Theorems \ref{thm:extremizer_beltrami} and \ref{thm:extremizer_distortion} respectively.
\end{theorem}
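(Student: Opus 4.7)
The plan is to reduce this statement to the classification of quasiregular extremizers given in Theorem \ref{thm:quasiregular_extremizers}, applied to the complex-valued map $f = u + iv$. The introduction to this section already establishes that $f$ is $K$-quasiregular. If I can show that $f$ itself is an extremizer for H\"older continuity at the origin, then Theorem \ref{thm:quasiregular_extremizers} produces a factorization $f = \Phi \circ g$ with $\Phi$ holomorphic (and hence harmonic as a complex-valued map) in a neighborhood of $g(0)$, with $\Phi'(g(0)) \ne 0$, and with $g$ a $K$-quasiconformal extremizer at the origin. The non-vanishing of the gradient of $\Phi$ at $g(0)$ is equivalent to the conformality condition $\Phi'(g(0)) \ne 0$. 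The bounds on the Beltrami coefficient and circular distortion of $g$ are then exactly the conclusions of Theorems \ref{thm:extremizer_beltrami} and \ref{thm:extremizer_distortion}.

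The one genuine step is to transfer the extremizer property from the scalar function $u$ to the complex-valued map $f$. For this I would use the pointwise identity
$$|f(z) - f(0)|^2 = |u(z) - u(0)|^2 + |v(z) - v(0)|^2 \ge |u(z) - u(0)|^2.$$
Given $\epsilon > 0$, any sequence $r_n \to 0^+$ witnessing the failure of $(1/K + \epsilon)$-H\"older continuity for $u$ at the origin (in the sense of Definition \ref{defn:extremizer}) automatically satisfies $|f(r_n) - f(0)| \ge r_n^{1/K + \epsilon}$, so $f$ is a $K$-quasiregular extremizer at the origin. At this point Theorem \ref{thm:quasiregular_extremizers} applies and yields the factorization, and the structural theorems on $g$ follow.

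The main conceptual step is recognizing that the entire theorem follows once one passes from the scalar extremizer $u$ to the vectorial extremizer $f$; there is no additional analytic obstacle, since the quasiregular factorization theorem and the two structural results on quasiconformal extremizers do all the heavy lifting once $f$ has been placed in the right class. The only mild subtlety is notational: the ``harmonic map'' $\Phi$ produced here is in fact holomorphic, which is a stronger condition than each component being harmonic, but this is consistent with the statement and is what one actually needs in order for the gradient hypothesis to be meaningful.
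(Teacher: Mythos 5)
Your proposal is correct, and it rests on the same machinery as the paper, with one organizational difference worth noting. The paper never passes through the statement that $f$ itself is an extremizer: it applies Stoilow factorization directly to the quasiregular map $f$, writes $u = (\real \Psi) \circ g$ with $\Psi$ holomorphic and $g$ $K$-quasiconformal, and then reruns the argument of Theorem \ref{thm:quasiregular_extremizers} at the level of the scalar function $u$ --- if $\Psi'(g(0)) = 0$ then $\real \Psi$ vanishes to second order and $u$ would be H\"older of exponent $\min\{1, 2/K\} > 1/K$ at the origin, and if $g$ were better than $1/K$-H\"older there then so would be $u$, since $\real \Psi$ is locally Lipschitz; both contradict extremality of $u$. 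You instead first lift extremality from $u$ to $f$ via the pointwise inequality $|f(z) - f(0)| \ge |u(z) - u(0)|$ and then invoke Theorem \ref{thm:quasiregular_extremizers} as a black box. Your route is a clean reduction: it uses Theorem \ref{thm:quasiregular_extremizers} as stated rather than adapting its proof, and it makes explicit the scalar-to-vector transfer that the paper leaves implicit; the paper's version has the mild advantage of directly yielding the factorization $u = (\real \Psi) \circ g$ of $u$ itself (the form emphasized in the introduction), whereas you factor $f$, which is exactly what the statement as written requires. In both arguments the heavy lifting is Stoilow factorization together with the observation that composing with a conformal (or nondegenerate smooth) map cannot change the H\"older exponent at the point, so your proof is a valid and essentially equivalent alternative.
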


\begin{proof}
This is essentially the same idea as the proof of Theorem \ref{thm:quasiregular_extremizers}. Since $u$ is the real part of a $K$-quasiregular map $f$, we can use Stoilow factorization to write $u = (\real \Psi) \circ g$ with $\Psi$ holomorphic and $g$ being $K$-quasiconformal. As before, $\Psi$ must have non-vanishing gradient at $g(0)$ (so as not to improve the regularity), and $g$ must be an extremizer for H\"older continuity at the origin; the result follows.
\end{proof}

Finally, we also have a generalization of the result on extremizers to nonlinear elliptic PDEs.
\begin{theorem}\label{thm:nonlinear_extremizer}
Let $\Omega \subseteq \mathbb{C}$ be a simply connected domain and suppose $\mathcal{A} : \Omega \times \mathbb{C} \to \mathbb{C}$ is measurable in $z \in \Omega$ and continuous in $\xi \in \mathbb{C}$ and satisfies the ellipticity condition
$$|\xi|^2 + |\mathcal{A}(z, \xi)|^2 \le \left(K + \frac 1 K\right) \langle \xi, \mathcal{A}(z, \xi)\rangle.$$
Then if $u \in W^{1, 2}_{\text{loc}}(\Omega)$ is a solution to
\begin{equation}
\diverg \mathcal{A}(z, \nabla u) = 0 \label{eq:nonlinear_elliptic}
\end{equation}
Then if $u$ is an extremizer for H\"older continuity at the origin, $u$ is the form of Theorem \ref{thm:elliptic_extremizer}.
\end{theorem}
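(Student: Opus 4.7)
The plan is to reduce this nonlinear statement to the quasiregular extremizer theory developed in Theorem \ref{thm:quasiregular_extremizers}, by building a quasiregular map $f = u + iv$ directly out of the solution $u$, exactly as in the linear divergence-form discussion leading up to Theorem \ref{thm:elliptic_extremizer}. The only nontrivial issue is that $\mathcal{A}$ depends on $\nabla u$, so the construction of a conjugate function $v$ has to be done a bit more carefully than in the linear case.

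First I would construct an $\mathcal{A}$-harmonic conjugate $v$. Since $u$ solves \eqref{eq:nonlinear_elliptic}, the vector field $\mathcal{A}(z, \nabla u)$ is divergence-free on the simply connected domain $\Omega$; applying the Poincaré lemma to the $1$-form $\ast \mathcal{A}(z, \nabla u) \cdot dz$ yields a $v \in W^{1,2}_{\text{loc}}(\Omega)$ with $\nabla v = \ast \mathcal{A}(z, \nabla u)$. This is well defined pointwise a.e.\ because $\mathcal{A}$ is continuous in the second variable and $\nabla u$ is measurable, and the $W^{1,2}_{\text{loc}}$ bound on $v$ follows from the ellipticity inequality applied to the measurable function $\mathcal{A}(z, \nabla u)$.

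Next I would verify that $f = u + iv$ is $K$-quasiregular. A direct calculation using $\nabla v = \ast \mathcal{A}(z, \nabla u)$ gives $|\nabla v|^2 = |\mathcal{A}(z, \nabla u)|^2$ and $J_f = u_x v_y - u_y v_x = \langle \nabla u, \mathcal{A}(z, \nabla u)\rangle$, so that
\[
\|Df\|^2 = |\nabla u|^2 + |\mathcal{A}(z, \nabla u)|^2 \le \left(K + \tfrac{1}{K}\right)\langle \nabla u, \mathcal{A}(z, \nabla u)\rangle = \left(K + \tfrac{1}{K}\right) J_f,
\]
which in terms of Wirtinger derivatives is exactly $|\dzbar f| \le \tfrac{K-1}{K+1} |\dz f|$. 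This step is mechanically identical to the linear case in the paragraph before Theorem \ref{thm:regularity_elliptic}; the nonlinear structure is absorbed entirely into the definition of $v$.

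Once $f$ is quasiregular, I can conclude in one move. Since $u = \real f$ is an extremizer for H\"older continuity at $0$, so is $f$ (because any improvement in the regularity of $f$ would improve that of its real part). Applying Theorem \ref{thm:quasiregular_extremizers} produces a factorization $f = \Phi \circ g$ with $g$ a $K$-quasiconformal extremizer and $\Phi$ conformal near $g(0)$; taking real parts, $u = (\real \Phi) \circ g$, where $\real \Phi$ is harmonic with non-vanishing gradient at $g(0)$. This is precisely the conclusion of Theorem \ref{thm:elliptic_extremizer}, so the bounds of Theorems \ref{thm:extremizer_beltrami} and \ref{thm:extremizer_distortion} apply to $g$. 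The main obstacle I expect is the very first step — justifying the existence and Sobolev regularity of $v$ when the flux $\mathcal{A}(z, \nabla u)$ depends nonlinearly on $\nabla u$ — but the ellipticity hypothesis gives $|\mathcal{A}(z, \xi)| \le (K + 1/K)^{1/2}|\xi|$ by Cauchy-Schwarz, so $\mathcal{A}(z, \nabla u) \in L^2_{\text{loc}}$ and the Poincaré lemma argument goes through.
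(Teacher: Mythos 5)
Your argument is correct, but it takes a different route from the paper. The paper's proof is a one-step reduction: it cites Theorem 16.1.8 of \cite{AstIwaMar09}, which says that every $W^{1,2}_{\text{loc}}$ solution of the nonlinear equation also solves a linear equation $\diverg \mathbb{A}(z)\nabla u = 0$ with $\mathbb{A}$ symmetric, measurable, of determinant one and with ellipticity constant $K$, and then simply invokes Theorem \ref{thm:elliptic_extremizer}. You instead bypass that citation and work with the nonlinear flux directly: since $\mathcal{A}(z,\nabla u)$ is divergence-free and lies in $L^2_{\text{loc}}$, the Poincar\'e lemma on the simply connected domain produces $v$ with $\nabla v = \ast\mathcal{A}(z,\nabla u)$, and the unified ellipticity inequality then gives $\|Df\|^2 \le (K + \tfrac 1K) J_f$ for $f = u + iv$, i.e.\ $K$-quasiregularity, after which extremality of $u$ transfers to $f$ (since $|f(r_n)-f(0)| \ge |u(r_n)-u(0)|$) and Theorem \ref{thm:quasiregular_extremizers} yields the Stoilow factorization $u = (\real\Phi)\circ g$. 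This is essentially a self-contained re-derivation of what the AIM reduction plus Theorem \ref{thm:elliptic_extremizer} package together: your route buys independence from the external theorem (you never need the symmetric matrix representation), while the paper's route buys brevity and reuses the linear case verbatim. Two small points to tidy: your bound $|\mathcal{A}(z,\xi)| \le (K + \tfrac 1K)^{1/2}|\xi|$ does not follow from Cauchy--Schwarz as stated; the inequality $|\xi|^2 + |\mathcal{A}|^2 \le (K+\tfrac 1K)|\xi|\,|\mathcal{A}|$ gives $|\mathcal{A}(z,\xi)| \le K|\xi|$, which is all you need for $\mathcal{A}(z,\nabla u) \in L^2_{\text{loc}}$. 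Also, to apply the extremizer machinery you should note (as the paper implicitly does throughout) that the quasiregular map $f$ has a continuous representative, so that the pointwise definition of extremizer makes sense.
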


\begin{proof}
By Theorem 16.1.8 of \cite{AstIwaMar09}, every solution $u \in W^{1, 2}_{\text{loc}}(\Omega)$ of \eqref{eq:nonlinear_elliptic} solves a linear elliptic equation
$$\diverg \mathbb{A}(z) \nabla u = 0$$
with $\mathbb{A}(z)$ a positive definite, symmetric measurable matrix field of determinant $1$. Furthermore, we have the ellipticity bound
$\frac 1 K |\xi|^2 \le \langle \mathbb{A}(z) \xi, \xi\rangle \le K |\xi|^2$
and the result follows from Theorem \ref{thm:elliptic_extremizer}.
\end{proof}

\section{References}

\begingroup
\renewcommand{\section}[2]{}%
\bibliographystyle{plain}
\bibliography{dissertation_bibliography.bib}
\endgroup

\end{document}